\documentclass[12pt]{amsart}
\usepackage[utf8]{inputenc}
\usepackage{mathtools,amssymb}
\usepackage{xcolor}
\usepackage{bm}

\usepackage[margin=0.75in,top=0.8in]{geometry}
\everymath{\displaystyle}


\newtheorem{theorem}{Theorem}[section]

\newtheorem{definition}{Definition}[section]

\newtheorem{lemma}{Lemma}[section]

\newtheorem{proposition}{Proposition}[section]
\newtheorem{remark}{Remark}[section]


\usepackage{hyperref}
\title{On exact Observability for Compactly perturbed infinite dimension systems}

\author{Nisrine Charaf\dag}
\address{ \dag Laboratoire Jean Kuntzmann,  UMR CNRS 5224, 
Universit\'e  Grenoble-Alpes, 
Bat. IMAG, 150 Pl. du Torrent 38400, St Martin d’H\`eres, France.}
\email{Nisrine.Charaf@univ-grenoble-alpes.fr}

\author{Faouzi Triki\ddag}
\address{\ddag Laboratoire Jean Kuntzmann,  UMR CNRS 5224, 
Universit\'e  Grenoble-Alpes, 
Bat. IMAG, 150 Pl. du Torrent 38400, St Martin d'H\`eres, France.}
\email{faouzi.triki@univ-grenoble-alpes.fr}

\begin{document}

%

\begin{abstract}
In this paper, we study the observability of compactly perturbed infinite dimensional systems. Assuming that a given
infinite-dimensional system with self-adjoint generator is exactly observable we derive  sufficient conditions 
on a compact self adjoint perturbation to guarantee that the perturbed system stays exactly observable. The analysis is based  on a careful asymptotic estimation of the spectral elements of the perturbed unbounded operator in terms of the compact perturbation. These intermediate results are  of importance themselves. 
\end{abstract}

\keywords {Exact observability; Compact perturbation; Hautus}

\subjclass{93C25; 47A55}

\maketitle

\section{Introduction}\label{observability: section1}
\noindent 

Let $X$ be a Hilbert space with norm $\|\cdot\|_{X}$ and inner product $\langle \cdot, \cdot \rangle_X$.  
Let $A : X \to X$ be a linear, unbounded, self-adjoint, non-negative operator with compact resolvent and domain $D(A)$.  
We define the scale of Hilbert spaces $(X_{\beta})_{\beta \in \mathbb{R}}$ associated with $A$ by  
$X_{\beta} = D(A^{\beta/2})$ with norm $\|z\|_{X_{\beta}} = \|A^{\beta/2} z\|_{X}$ for $\beta \geq 0$,  
and for $\beta < 0$ we set $X_{-\beta} = X_{\beta}^*$, the dual space with respect to the pivot space $X$.

\noindent The operator $A$ can be extended or restricted to each space $X_{\beta}$ 
so that it becomes a bounded operator
\[
A : X_{\beta} \;\longrightarrow\; X_{\beta-2}, 
\qquad \forall\, \beta \in \mathbb{R}.
\]

\noindent According to Stone's theorem, $iA$ generates a strongly continuous group of isometries in $X$ denoted $(e^{itA})_{t {\in \mathbb{R}}}$ \cite{TW}.
\\
Further, let $Y$ be a  Hilbert space equipped with the norm and scalar product respectively ${\lVert \cdot \rVert_{Y}}$ and $\langle \cdot, \cdot \rangle_Y$.
Let $C: D(A) \to Y$ be a bounded linear operator.  For $z_0\in X$, and $y\in Y,$  we consider the following infinite-dimensional observation system

\begin{equation}\label{observability1}
\left\{ \begin{array}{lllcc}
   \dot {z} (t)  = iA z(t) , \quad  t>0, \\
    y(t) = Cz(t), \quad t>0, \\
    z(0) =z_0.
\end{array}\right.
\end{equation}
 The element $z_0$ is called the initial state, $z(t)$ is called the state at time $t$, and $y$ is the output function.
Note that although $z_0 \in D(A)$, the function $z(t)$ need not belong to $D(A)$. Therefore in order to be able to
define the output function $y$ a continuous extension of $C$ to the whole space $X$ is required.  

\begin{definition}
    The operator $C$ in system \eqref{observability1} is  an admissible observation operator if for every $T>0$ there exists a constant $K_{T}>0$ such that

    \begin{equation}\label{observability2}
        \int_0^T {\lVert y(t) \rVert^{2}_{Y}} dt \leq K_{T} {\lVert z_{0} \rVert^{2}_{X}}, \quad 
 \forall z_{0} \in D(A).
 \end{equation}

 If $C$ is bounded, i.e. it can be extended such that $C$ $\in$ $\mathcal{L}( X, Y)$, then $C$ is clearly  an admissible observation operator
 with $K_{T} = T\|C\|^{2}$.
 
\end{definition}

We further assume that the operator $C$ is admissible. Next we introduce the concept of exact observability. 

\begin{definition}
    System \eqref{observability1} is exactly observable in time $T$ if there exists a constant $k_{T}$ $>$ 0 such that 
     \begin{equation}\label{observability3}
       k_{T} {\lVert z_{0} \rVert^{2}_{X}} \leq    \int_0^T {\lVert y(t) \rVert^{2}_{Y}} dt ,\quad
 \forall z_{0} \in D(A).
 \end{equation}
System \eqref{observability1} is said to be exactly observable if it is exactly observable for a given time $T>0$.
\end{definition}
\noindent The observability inequality can be interpreted as a stability estimate for the inverse problem of recovering the initial state $z_0$ from the knowledge of the observation $y(t)$, $t \in (0,T)$, where $T>0$ is chosen sufficiently large. It is also known that exact observability and exact controllability are dual properties \cite{Rusell 1977}. These properties can be established using time-domain techniques such as non-harmonic Fourier series \cite{AI,KL}, the multiplier method \cite{Li}, and microlocal analysis techniques \cite{BLR}, or by frequency-domain techniques in the spirit of the well-known Fattorini-Hautus test for finite-dimensional systems \cite{AT,Fa,Ha,ZY}.\\

   \noindent In this paper, we are interested in the exact observability of weakly perturbed systems. Specifically, assuming that the system in \eqref{observability1} is exactly observable, our objective is to derive sufficient conditions for an unbounded perturbation \( K \) of \( A \) such that the system remains exactly observable when \( A \) is replaced by \( A + K \).\\

 \noindent The analysis is carried out using frequency domain techniques and resolvent estimates. Throughout we assume that the operator $A$ has a compact resolvent and therefore, that the spectrum of $A$ is formed by isolated eigenvalues. More precisely, since $A$ is self-adjoint and  positive,  the spectrum of $A$ is given by $\sigma(A) = \{\mu_{k}\}_{k \in \mathbb{N}^{*}}$,  where $(\mu _{k})_{k \in \mathbb{N}^{*}}$ is a sequence of positive increasing real numbers. Denote 
$(\phi_k)_{k\in \mathbb N}$ the normalized eigenfunctions  associated to the eigenvalues $(\mu _{k})_{k \in \mathbb{N}^{*}}$,
that is \[ A\phi_k = \mu_k \phi_k, \;\; \|\phi_k\|_X = 1, \qquad k \in \mathbb N^*.\]


 \noindent The plan of the paper is as follows. In Section~2, we recall known characterizations of exact observability in the frequency domain. The main results of the paper are presented in Section~3. More precisely, we first derive an asymptotic relation between the perturbed and unperturbed eigenvalues in Theorem~\ref{eigenvalues}. Then, under additional assumptions on the perturbation, we prove exact observability for the perturbed system in Theorem~\ref{main theorem}. Sections~4 and~5 are devoted to the proofs of Theorems~\ref{eigenvalues} and~\ref{main theorem}, respectively.
 \section{Characterization of exact Observability}

We recall the following  result derived  in \cite[Theorem 4.4]{Russel} by this theorem.
\begin{theorem} \label{thm1}
    \noindent Let $A$ be a  self-adjoint, positive with compact resolvent operator, and  let  $C$ be an  admissible operator for the system \eqref{observability1}.
    Assume  the following gap condition 
\begin{equation} \label{gap condition}
 {\mu}_{k+1} - {\mu}_{k} > \gamma, 
\end{equation} 
holds for some constant $\gamma >0$.\\

 Then, the system \eqref{observability1} is exactly observable if and only if there exists $\delta$ $>$ 0 such that for all $k$ $\in$ $\mathbb N^{*}$ 
 \begin{equation} \label{observabilitycondition}
  \left \lVert C\phi_k \right \rVert^2_{Y} \geq \delta, \qquad \forall k\in \mathbb N^*.
  \end{equation}
 \end{theorem}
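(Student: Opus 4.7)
\medskip

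\noindent\textbf{Proof proposal.} The plan is to expand the state on the eigenbasis and reduce the observability integral to a non-harmonic Fourier series, then apply an Ingham-type inequality enabled by the spectral gap \eqref{gap condition}. Since $A$ is self-adjoint and positive with compact resolvent, $(\phi_k)_{k\in\mathbb{N}^*}$ forms an orthonormal basis of $X$. Writing $z_0=\sum_{k\ge 1} a_k\phi_k$ with $\|z_0\|_X^2=\sum_k|a_k|^2$, the solution of \eqref{observability1} is $z(t)=\sum_k a_k e^{it\mu_k}\phi_k$, and by admissibility of $C$ the output can be represented as the $L^2((0,T);Y)$-convergent series
\begin{equation*}
y(t)=\sum_{k\ge 1} a_k\, e^{it\mu_k}\, C\phi_k.
\end{equation*}

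For the necessity part (\emph{exact observability} $\Rightarrow$ \eqref{observabilitycondition}), I would simply plug in the test state $z_0=\phi_k$. Then $y(t)=e^{it\mu_k}C\phi_k$, so \eqref{observability3} gives
\begin{equation*}
k_T = k_T\|\phi_k\|_X^2 \;\le\; \int_0^T \|C\phi_k\|_Y^2\,dt \;=\; T\,\|C\phi_k\|_Y^2,
\end{equation*}
hence $\|C\phi_k\|_Y^2\ge k_T/T=:\delta$, uniformly in $k$.

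For the sufficiency part (\eqref{observabilitycondition} $\Rightarrow$ \emph{exact observability}), the key step is a Hilbert-space-valued Ingham inequality: thanks to the gap condition \eqref{gap condition}, there exists $T_0=2\pi/\gamma$ such that for every $T>T_0$ one has, for vectors $v_k\in Y$,
\begin{equation*}
C_1(T,\gamma)\sum_{k\ge 1}\|v_k\|_Y^2 \;\le\; \int_0^T\Bigl\|\sum_{k\ge 1} v_k e^{it\mu_k}\Bigr\|_Y^2 dt \;\le\; C_2(T,\gamma)\sum_{k\ge 1}\|v_k\|_Y^2,
\end{equation*}
with positive constants $C_1,C_2$ depending only on $T$ and $\gamma$. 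The Hilbert-valued version follows from the scalar Ingham inequality applied coordinatewise in any orthonormal basis of $\overline{\mathrm{span}}\{v_k\}$ and then summed, since the lower and upper constants are uniform. Applying this with $v_k=a_k C\phi_k$ and using \eqref{observabilitycondition} yields
\begin{equation*}
\int_0^T\|y(t)\|_Y^2\,dt \;\ge\; C_1(T,\gamma)\sum_{k\ge 1}|a_k|^2\|C\phi_k\|_Y^2 \;\ge\; C_1(T,\gamma)\,\delta\,\|z_0\|_X^2,
\end{equation*}
which is \eqref{observability3} with $k_T=C_1(T,\gamma)\delta$, provided $T>2\pi/\gamma$. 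Observability for smaller times is then recovered from the group property of $e^{itA}$ and admissibility.

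The main obstacle I anticipate is the vector-valued Ingham inequality: one must justify that the classical scalar estimate on $(\mu_k)$ with gap $\gamma$ extends uniformly to $Y$-valued coefficients, and that the above series converges in $L^2((0,T);Y)$ (this is where admissibility of $C$ enters crucially, to legitimize the interchange of summation and integration and to give the matching upper bound). The remainder of the argument is then a direct reduction of the Hilbert-space norms to scalar square-summable sequences.
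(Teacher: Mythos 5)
The paper does not actually prove this theorem --- it is recalled from the reference [Russell--Weiss, Thm.~4.4] --- so there is no internal proof to compare against; judged on its own, your argument is the standard self-contained route and is essentially correct. The necessity half (test on $z_0=\phi_k$, giving $\delta=k_T/T$) is exactly right. The sufficiency half via a $Y$-valued Ingham inequality is sound: the coordinatewise reduction you sketch does work, because after expanding each $v_k$ in an orthonormal basis of $Y$ all the resulting scalar contributions are non-negative, so the scalar Ingham lower and upper bounds can be summed (Tonelli), and the restriction $T>2\pi/\gamma$ is harmless since the theorem only asserts exact observability in \emph{some} time. Two points deserve care. First, your closing remark that observability ``for smaller times is then recovered from the group property'' is false in general (exact observability can genuinely fail below a critical time); fortunately it is also unnecessary here, precisely because the definition in the paper only requires observability for a given $T>0$. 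Second, to apply Ingham cleanly you should first establish the estimate for finite linear combinations of eigenfunctions, where the interchange of sum and integral is trivial, and then pass to general $z_0\in D(A)$ by density, using the admissibility bound \eqref{observability2} to control the tail of the output; you flag this issue but it is the one step that should be written out, since it is the only place admissibility is genuinely used in the sufficiency direction. Finally, note that the strict gap \eqref{gap condition} together with the increasing enumeration of the spectrum implicitly forces the eigenvalues to be simple, which both the statement and your argument tacitly assume.
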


 


We also recall this result (see \cite{Burq} for the proof).

\begin{theorem}
The system \eqref{observability: section1} is exactly observable if and only if there exists a constant $\rho>$ 0 such that the following inequality holds 
 \begin{eqnarray} \label{Hautus condition}
 \left \lVert (A - wI)z \right \rVert^{2}_{X} + \left \lVert Cz \right \rVert^{2}_{Y} \geq \rho \left \lVert z \right \rVert^{2}_{X},  \quad \forall \omega \in \mathbb{R}, \quad   \forall z \in D(A),
 \end{eqnarray}
 where $I$ is the identity operator. 
\end{theorem}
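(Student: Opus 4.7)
I would prove the equivalence by handling the two directions separately, exploiting that $iA$ generates a unitary group on $X$ and that the admissibility constant $K_T$ and observability constant $k_T$ can be tracked explicitly.

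\textbf{Direction ``observability $\Rightarrow$ Hautus''.} Fix $\omega \in \R$ and $z \in D(A)$, and set $f = (A-\omega I)z$. Write the trajectory $u(t) = e^{itA}z$ as $u = w + v$, where $w(t) = e^{i\omega t}z$ is a pure mode. A direct calculation gives $\dot v = iAv + i e^{i\omega t}f$ with $v(0)=0$, so by Duhamel's formula $v(t) = i\int_0^t e^{i(t-s)A} e^{i\omega s} f\,ds$. Applying $C$, taking $Y$-norms, and using $\|a+b\|_Y^2 \le 2\|a\|_Y^2+2\|b\|_Y^2$ yields
\[ \int_0^T \|Cu(t)\|_Y^2\, dt \le 2T\|Cz\|_Y^2 + 2\int_0^T \|Cv(t)\|_Y^2\, dt. \]
For the Duhamel term, Cauchy--Schwarz in $s$, Fubini in $(s,t)$, and finally the admissibility estimate \eqref{observability2} applied to $e^{i\tau A}f$ give $\int_0^T \|Cv(t)\|_Y^2\, dt \le T^2 K_T \|f\|_X^2$. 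Combining with the observability lower bound $k_T\|z\|_X^2 \le \int_0^T \|Cu(t)\|_Y^2\, dt$ produces the Hautus inequality with $\rho = k_T/(2\max(T, T^2 K_T))$.

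\textbf{Direction ``Hautus $\Rightarrow$ observability''.} I would argue by contradiction. Suppose there exists $(z_n) \subset D(A)$ with $\|z_n\|_X = 1$ and $\int_0^T \|Ce^{itA}z_n\|_Y^2\, dt \to 0$, and aim to construct frequencies $\omega_n \in \R$ for which $\|(A-\omega_n)z_n\|_X^2 + \|Cz_n\|_Y^2 \to 0$, directly contradicting the Hautus inequality. Expanding $z_n = \sum_k c_k^{(n)}\phi_k$, the output admits the series $Ce^{itA}z_n = \sum_k c_k^{(n)} e^{i\mu_k t} C\phi_k$. I would then select $\omega_n$ so that the spectral mass of $z_n$ concentrates in a shrinking window around $\omega_n$, splitting $z_n$ into a ``low-frequency'' part near $\omega_n$ (where $(\mu_k-\omega_n)^2$ is small, controlling $\|(A-\omega_n)z_n\|_X$) and a ``high-frequency'' complement whose contribution is tamed by admissibility and a Fourier-type argument in $L^2(0,T)$. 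The smallness of the output then forces both $\|(A-\omega_n)z_n\|_X \to 0$ and $\|Cz_n\|_Y \to 0$.

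\textbf{Main obstacle.} The converse direction is clearly the hard part, which is why the authors appeal to \cite{Burq}. Without a uniform spectral gap, one cannot simply read off the concentration frequency $\omega_n$ from the eigenvalue distribution; instead, translating $L^2(0,T;Y)$-smallness of the output into simultaneous smallness of $\|(A-\omega_n)z_n\|_X$ and $\|Cz_n\|_Y$ requires a delicate spectral cutoff that exploits compactness of the resolvent while controlling the tails via admissibility. This balancing between the time-frequency and spatial-spectral sides is the technical core of the argument.
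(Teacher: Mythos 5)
The paper offers no proof of this theorem at all --- it is quoted from \cite{Burq} --- so the only meaningful comparison is with the standard Burq--Zworski/Miller argument. Your first direction (exact observability $\Rightarrow$ Hautus) is correct and complete: the splitting $e^{itA}z=e^{i\omega t}z+v$ with $v(t)=i\int_0^t e^{i(t-s)A}e^{i\omega s}f\,ds$, $f=(A-\omega I)z$, the bound $\int_0^T\|Cv\|_Y^2\,dt\le T^2K_T\|f\|_X^2$ via Cauchy--Schwarz, Fubini and admissibility, and the constant $\rho=k_T/(2\max(T,T^2K_T))$ all check out (modulo the routine remark that $f$ need only lie in $X$, so admissibility is invoked for the extended output map).

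The converse direction is only a sketch, and the strategy you describe has a genuine gap. You propose, given $\|z_n\|_X=1$ with $\int_0^T\|Ce^{itA}z_n\|_Y^2\,dt\to0$, to choose a single frequency $\omega_n$ per $z_n$ so that $\|(A-\omega_n)z_n\|_X^2+\|Cz_n\|_Y^2\to0$. Nothing forces the spectral mass of $z_n$ to concentrate near one frequency: take $z_n=\tfrac1{\sqrt2}(\phi_{k_n}+\phi_{m_n})$ with $\mu_{m_n}-\mu_{k_n}\to\infty$ and $\|C\phi_{k_n}\|_Y,\|C\phi_{m_n}\|_Y\to0$. Then the output tends to zero in $L^2(0,T;Y)$, yet $\|(A-\omega)z_n\|_X^2\ge\tfrac14(\mu_{m_n}-\mu_{k_n})^2\to\infty$ for \emph{every} $\omega$, so no choice of $\omega_n$ can close your contradiction --- even though the Hautus condition is indeed violated here, but by the individual modes $\phi_{k_n}$, not by the vectors $z_n$ you are handed. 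The known proof avoids selecting a concentration frequency altogether: set $u(t)=e^{itA}z$, multiply by a cutoff $\chi\in C_c^\infty(0,T)$, take the Fourier transform in $t$ so that $(A-\omega)\widehat{\chi u}(\omega)=i\widehat{\chi' u}(\omega)$, apply the Hautus inequality at each $\omega$, and integrate in $\omega$ using Plancherel and $\|u(t)\|_X=\|z\|_X$ to get $\rho\|\chi\|_{L^2}^2\|z\|_X^2\le\|\chi'\|_{L^2}^2\|z\|_X^2+\int_0^T\|Cu\|_Y^2\,dt$; choosing $T$ large enough that $\rho\|\chi\|_{L^2}^2>\|\chi'\|_{L^2}^2$ gives exact observability. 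Note this also shows the converse only yields observability for $T$ sufficiently large (of order $\rho^{-1/2}$), which is consistent with the statement but means a contradiction argument must be set up for such a $T$, not an arbitrary one.
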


\section{Main results}

In this section we present the main results of the paper.  
We consider the infinite-dimensional observation system described by  
\begin{equation} \label{observability4}
\left\{
\begin{array}{ll}
   \dot{z}(t) = i A_K z(t), & t > 0, \\[0.3em]
   y(t) = C z(t), & t > 0, \\[0.3em]
   z(0) = z_0,
\end{array}\right.
\end{equation}
where $A_{K} = A + K$, and 
$K : D(A) \to X$,  is a self-adjoint, non-negative, and compact operator.  \\

\noindent The goal of the paper is to study the observability properties of the  evolution system \eqref{observability4}. Precisely, assuming that the unperturbed system \eqref{observability1} is exactly observable and that $K$ is a self-adjoint perturbation of $A$, we investigate  sufficient conditions under which the perturbed system \eqref{observability4} remains exactly observable.  \\

\noindent It is straightforward to verify that the operator $A_K$ has a compact resolvent; consequently, its spectrum consists of isolated eigenvalues. We denote by $(\tilde{\mu}_n)_{n \in \mathbb{N}^*}$ the increasing sequence of eigenvalues of the perturbed operator $A_K$, and by $(\tilde{\phi}_n)_{n \in \mathbb{N}^*}$ the associated normalized eigenfunctions. They satisfy
\[
A_K \tilde{\phi}_k = \tilde{\mu}_k \tilde{\phi}_k, \qquad  \forall k \in \mathbb{N}^*.
\]

\noindent We first assume  the following weak  necessary condition for exact observability on the eigenfunctions of the operator $A_K$: 
      
   \begin{equation}
      \label{necessarycondition}
      \|C \tilde \phi_k \|_{Y} \not= 0, \qquad \forall k\in \mathbb N^*.
  \end{equation}
  
  \medskip

\noindent This condition is also necessary for weak observability (see for instance \cite{AT} and references therein). We will see later the assumption concerns mainly the low-frequency eigenfunctions ($k \leq k_\rho$ for some integer $k_\rho$). Indeed, taking 
\[c_n = \min_{k \leq n} \| C \tilde{\phi}_k \|_{Y}, \qquad n\in \mathbb N^*,\] it is straightforward to see that the system \eqref{observability4} is exactly observable if and only if the non-increasing sequence $c_n>0$ does not converge to zero. For a fixed $k \in \mathbb N^*$ it is possible to construct a perturbation $K$ within a general class of operators that breaks the condition  \eqref{necessarycondition}.\\


\medskip


We first derive the following  relationship between the eigenvalues of  $A$ and $A+K$ that is of interest itself.

\begin{theorem}\label{eigenvalues}
There exists a function $f \in C^0(\mathbb R_+)$ satisfying   $\lim_{\mu \to +\infty}f(\mu) = 0$ such that
 \begin{equation} \label{identity0}
    \tilde \mu_n = \mu_n ( 1 + f(\mu_n)), \quad \forall n\in \mathbb N^*.
\end{equation}    

\end{theorem}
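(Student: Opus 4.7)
The plan is to reduce the theorem to a classical Weyl-type eigenvalue perturbation bound, then package the relative error as a continuous function via interpolation. Since $K$ is compact and self-adjoint on $X$, it is in particular bounded, so $\|K\|_{\mathcal L(X)} < +\infty$, and $A_K = A+K$ is self-adjoint on $D(A)$, bounded below, with compact resolvent. Both $A$ and $A_K$ admit the Courant--Fischer min-max characterization of their eigenvalues.

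The first step is to prove a uniform bound $|\tilde\mu_n - \mu_n| \leq \|K\|_{\mathcal L(X)}$ for every $n$. Using the min-max formulas
\[
\mu_n = \inf_{\substack{V \subset D(A)\\ \dim V = n}} \sup_{v\in V\setminus\{0\}} \frac{\langle A v,v\rangle_X}{\|v\|_X^2}, \qquad
\tilde\mu_n = \inf_{\substack{V \subset D(A)\\ \dim V = n}} \sup_{v\in V\setminus\{0\}} \frac{\langle A_K v,v\rangle_X}{\|v\|_X^2},
\]
together with the pointwise bound $|\langle K v,v\rangle_X| \leq \|K\|_{\mathcal L(X)} \|v\|_X^2$, I would test the first infimum with the trial subspace $V = \mathrm{span}(\phi_1,\dots,\phi_n)$ to obtain $\tilde\mu_n \leq \mu_n + \|K\|_{\mathcal L(X)}$, and then repeat the same argument to the pair $(A_K, A_K + (-K))$ to obtain $\mu_n \leq \tilde\mu_n + \|K\|_{\mathcal L(X)}$. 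This yields the claimed two-sided bound.

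Since $A$ has compact resolvent, $\mu_n \to +\infty$, so the relative error
\[
g_n := \frac{\tilde\mu_n - \mu_n}{\mu_n}
\]
is well defined (as $\mu_1 > 0$) and satisfies $|g_n| \leq \|K\|_{\mathcal L(X)}/\mu_n \to 0$. It remains to upgrade the discrete sequence $(g_n)$ to a continuous function on $\mathbb R_+$. I would define $f$ by piecewise linear interpolation: set $f(\mu) := g_n + (g_{n+1}-g_n)(\mu-\mu_n)/(\mu_{n+1}-\mu_n)$ on $[\mu_n,\mu_{n+1}]$, and extend linearly on $[0,\mu_1]$ from $f(0) = 0$ to $f(\mu_1) = g_1$. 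By construction $f \in C^0(\mathbb R_+)$, $f(\mu_n) = g_n$, and on each $[\mu_n,\mu_{n+1}]$ one has $|f(\mu)| \leq \max(|g_n|,|g_{n+1}|)$, so $f(\mu)\to 0$ as $\mu\to+\infty$. The identity $\tilde\mu_n = \mu_n(1+f(\mu_n))$ is then immediate.

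The only substantive step is the Weyl bound in the first paragraph; the rest is book-keeping. The main technical point to watch is the legitimacy of the min-max formulation for the unbounded self-adjoint operators $A$ and $A_K$, but this is standard under the compact resolvent hypothesis, and the trial subspace $\mathrm{span}(\phi_1,\dots,\phi_n)$ is finite-dimensional and lies in $D(A) = D(A_K)$, so there is no domain issue.
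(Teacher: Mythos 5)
Your argument hinges on the very first line: ``Since $K$ is compact and self-adjoint on $X$, it is in particular bounded, so $\|K\|_{\mathcal{L}(X)}<+\infty$.'' That is an additional hypothesis, not one the paper grants you. The standing assumption in Section~3 is only that $K:D(A)\to X$ is compact (i.e.\ $K$ is $A$-compact, equivalently $KA^{-1}$ is compact on $X$), and the introduction explicitly announces that $K$ may be an \emph{unbounded} perturbation of $A$; the main theorem's condition $K:D(A^j)\to D(A^{j-1})$, $j=0,1$, likewise lets $K$ lose regularity like a fractional power of $A$. For such $K$ one has $\|K\|_{\mathcal{L}(X)}=+\infty$ and your Weyl bound $|\tilde\mu_n-\mu_n|\le\|K\|_{\mathcal{L}(X)}$ is vacuous. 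A concrete illustration is $K=A^{1/2}$: it maps $D(A)$ compactly into $X$ (because $A^{1/2}A^{-1}=A^{-1/2}$ is compact), is self-adjoint and non-negative, and gives $\tilde\mu_n-\mu_n=\sqrt{\mu_n}\to+\infty$, so no uniform additive bound can hold --- yet the theorem's conclusion is still true, since $\tilde\mu_n/\mu_n=1+\mu_n^{-1/2}\to 1$. So the gap is not a technicality: the quantity you must control is the \emph{relative} error, and it cannot be reached through a uniform bound on $\tilde\mu_n-\mu_n$.

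This is precisely where the paper's proof takes a different route. It also starts from Courant--Fischer with the trial spaces $E_n$ and $\widetilde E_n$, but instead of peeling off $\|K\|$ it keeps the quadratic form and writes the two-sided estimate in multiplicative form, $\frac{1}{1-\beta_n}\le\frac{\tilde\mu_n}{\mu_n}\le 1+\alpha_n$ with $\alpha_n=\langle KA^{-1}\hat\phi_n,\hat\phi_n\rangle_X$ and $\beta_n=\langle K(A+K)^{-1}\widetilde{\hat\phi}_n,\widetilde{\hat\phi}_n\rangle_X$; it then shows $\alpha_n,\beta_n\to 0$ using only the compactness of $KA^{-1}$ and $K(A+K)^{-1}$ on $X$ together with the weak convergence to zero of the orthonormal eigenfunctions. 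That is the mechanism that survives when $K$ is merely $A$-compact. Your final interpolation step (piecewise linear $f$ with $f(\mu_n)=g_n$) is identical in substance to the paper's and is fine. If you are willing to add the hypothesis that $K$ extends to a compact operator on $X$ itself, then your proof is correct, shorter, and even quantitatively sharper (it yields $|f(\mu_n)|\le\|K\|_{\mathcal{L}(X)}/\mu_n$); but as a proof of the theorem under the paper's stated hypotheses it does not go through. To repair it in the general setting, replace the bound $|\langle Kv,v\rangle_X|\le\|K\|_{\mathcal{L}(X)}\|v\|_X^2$ by an estimate of $\langle Kv,v\rangle_X$ against $\langle Av,v\rangle_X$ (relative form-boundedness with vanishing relative bound), which is essentially what the paper's $\alpha_n,\beta_n$ argument accomplishes.
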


The asymptotic identity \eqref{identity0} shows that the eigenvalues of $A+K$ are small relative perturbation of those
of $A$. 
The following  is the main result of the paper.

 \begin{theorem} \label{main theorem} Assume the system \eqref{observability1} is exactly observable,
$K: D(A^j) \longrightarrow D(A^{j-1}), j=0, 1, $ is a self-adjoint, non-negative  compact operator, and  following 
additional conditions: 
\begin{itemize}
\item[(i)] There exists $\kappa \in [0,1[$ such that $xf(x) + \kappa x$ is a non-decreasing function on  $[\mu_1, +\infty)$.

\item[(ii)] $AK - KA : X \longrightarrow X$ is a compact operator.

\end{itemize}

\noindent Then, the system \eqref{observability4} is exactly observable.     

 \end{theorem}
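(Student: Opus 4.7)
The plan is to establish the frequency-domain Hautus-type inequality from Section~2 for the perturbed generator $A_K$. Since $K$ is a bounded operator on $X$ (it is compact and self-adjoint), the domains coincide, $D(A_K)=D(A)$, and the goal is to exhibit $\rho'>0$ such that
\[
\|(A_K-\omega I)z\|_X^2 + \|Cz\|_Y^2 \geq \rho'\|z\|_X^2,\qquad \forall z\in D(A),\ \forall\omega\in\mathbb R.
\]
I would argue by contradiction, exploiting the analogous inequality for $A$ (which holds with some $\rho>0$ since the unperturbed system is exactly observable) together with the spectral information on $A_K$ encoded in Theorem~\ref{eigenvalues} and the structural hypotheses (i)--(ii).

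The first step is a transfer inequality. Writing $(A-\omega)z=(A_K-\omega)z-Kz$ and combining the Hautus inequality for $A$ with the triangle inequality gives
\[
\rho\|z\|_X^2 \leq 2\|(A_K-\omega)z\|_X^2 + 2\|Kz\|_X^2 + \|Cz\|_Y^2.
\]
If the perturbed Hautus inequality failed, one could extract sequences $z_m\in D(A)$ with $\|z_m\|_X=1$ and $\omega_m\in\mathbb R$ with $\|(A_K-\omega_m)z_m\|_X^2+\|Cz_m\|_Y^2\to 0$. The display above then forces $\liminf\|Kz_m\|_X^2\geq\rho/2>0$, and the task becomes to rule this out via a case distinction on $\omega_m$, using the spectral expansion $z_m=\sum_k a_k^m\tilde\phi_k$ in the orthonormal basis of $A_K$-eigenfunctions.

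Three regimes arise. If $\omega_m\to\omega^\ast\notin\sigma(A_K)$, compactness of the resolvent of $A_K$ gives $\operatorname{dist}(\omega_m,\sigma(A_K))\geq d>0$, hence $\|z_m\|_X\leq d^{-1}\|(A_K-\omega_m)z_m\|_X\to 0$, contradicting $\|z_m\|_X=1$. If $\omega_m\to\omega^\ast=\tilde\mu_{k_0}\in\sigma(A_K)$, the isolation of $\tilde\mu_{k_0}$ gives $\sum_{k\ne k_0}|a_k^m|^2\to 0$; combining Theorem~\ref{eigenvalues} with hypothesis (i) one obtains
\[
\tilde\mu_{n+1}-\tilde\mu_n \geq (1-\kappa)(\mu_{n+1}-\mu_n) > 0,
\]
so $\tilde\mu_{k_0}$ is simple, and after passing to a subsequence $z_m\to c\tilde\phi_{k_0}$ with $|c|=1$. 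The equation $(A_K-\omega_m)z_m\to 0$ then upgrades this to convergence in $D(A)$, and since $C\in\mathcal L(D(A),Y)$ one gets $\|Cz_m\|_Y\to\|C\tilde\phi_{k_0}\|_Y>0$ by \eqref{necessarycondition}, contradicting $\|Cz_m\|_Y\to 0$. Finally, when $|\omega_m|\to\infty$, Theorem~\ref{eigenvalues} and the spectral expansion yield $a_k^m\to 0$ for every fixed $k$, hence $z_m\rightharpoonup 0$ in $X$; compactness of $K$ gives $Kz_m\to 0$, again contradicting $\liminf\|Kz_m\|_X>0$.

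The main obstacle is the bounded case $\omega_m\to\tilde\mu_{k_0}$, where one must convert the weak concentration of $z_m$ near the $\tilde\mu_{k_0}$-eigenspace into strong convergence to a scalar multiple of $\tilde\phi_{k_0}$; this relies crucially on the simplicity of the perturbed spectrum guaranteed by (i) through Theorem~\ref{eigenvalues}, since \eqref{necessarycondition} does not rule out cancellations inside multi-dimensional eigenspaces. Hypothesis (ii) enters primarily through Theorem~\ref{eigenvalues}, where the commutator $AK-KA$ being compact underpins the asymptotic identity \eqref{identity0}; together, (i)--(ii) provide exactly the spectral framework that makes the three-case analysis sharp enough to close the contradiction and conclude exact observability of \eqref{observability4}.
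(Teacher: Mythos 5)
Your strategy---verifying the resolvent (Hautus) condition for $A_K$ directly by a compactness--contradiction argument---is genuinely different from the paper's, which instead verifies the Russell--Weiss spectral criterion of Theorem~\ref{thm1} for $A_K$: a gap condition for $(\tilde{\mu}_n)$ deduced from (i), and a uniform lower bound on $\|C\tilde{\phi}_k\|_Y$ at high frequencies obtained from a commutator identity for the spectral projections, $\tilde{P}_k K - K\tilde{P}_k = \tilde{F}_j(\tilde{\mu}_k)R\tilde{P}_k + \tilde{P}_k R\tilde{F}_j(\tilde{\mu}_k)$ with $R = AK-KA$. Your cases for $\omega_m\to\omega^\ast\notin\sigma(A_K)$ and $\omega_m\to\tilde{\mu}_{k_0}$ are essentially sound (the latter uses (i) exactly as the paper's Lemma on the gap does). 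But there is a genuine gap, and it sits precisely where the paper needs hypothesis (ii) --- which your proof never actually uses. Your claim that (ii) ``enters through Theorem~\ref{eigenvalues}'' is incorrect: the proof of Theorem~\ref{eigenvalues} uses only the compactness of $KA^{-1}$ and $K(A+K)^{-1}$ on $X$, not the commutator.

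The gap is in your opening sentence and in the regime $|\omega_m|\to\infty$. The theorem does not assume that $K$ is bounded, let alone compact, as an operator on $X$: it assumes $K:D(A^j)\to D(A^{j-1})$ compact for $j=0,1$, i.e.\ $K$ is only relatively compact with respect to $A$, and the introduction explicitly targets unbounded perturbations. For instance $K=A^{1/2}$ satisfies every hypothesis ((i) with $f(x)=x^{-1/2}$, (ii) with $R=0$) yet is unbounded on $X$. In the regime $|\omega_m|\to\infty$ one has $\|A_K z_m\|_X\to\infty$, hence $\|z_m\|_{D(A)}\to\infty$, and $\|Kz_m\|_X$ is controlled only by the $D(A)$-norm; weak convergence $z_m\rightharpoonup 0$ in $X$ gives no decay of $Kz_m$ (for $K=A^{1/2}$ one rather has $\|Kz_m\|_X\sim|\omega_m|^{1/2}\to\infty$). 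So the contradiction with $\liminf\|Kz_m\|_X^2\ge\rho/2$ does not close. This high-frequency regime is exactly where the paper invokes (ii): instead of the (generally false) statement $K\tilde{\psi}_k\to 0$, it proves only that the off-eigenspace component $(I-\tilde{P}_k)K\tilde{\psi}_k$ tends to zero, using compactness of $R$ on $X$, and this weaker decay suffices once the Hautus inequality for $A$ is tested at the shifted frequency $\omega=\tilde{\mu}_k-\sigma_k$. If you strengthen the hypothesis to ``$K$ compact on $X$'' your argument does close --- but then you are proving a different statement, one in which (ii) is superfluous, not the theorem as stated.
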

 
 \begin{remark} 
 Notice that the two conditions are independent and of different types. Indeed, we will see later that condition $(i)$ ensures that the eigenvalues of $A + K$ satisfy a gap condition, while condition $(ii)$ guarantees that the high-frequency eigenfunctions of the same operator satisfy the inequality \eqref{observabilitycondition}.
  \end{remark}
 
 \section{Proof of Theorem \ref{eigenvalues}}
\begin{proof}
    
 \noindent First, we use the  mini-max Theorem (see \cite{MB}) to characterize the eigenvalues of the two operators $A$ and $A_K$.  

\medskip

\noindent Since $A : D(A) \subseteq X \to X$ and $A + K : D(A) \subseteq X \to X$ are two self-adjoint, non-negative operators with compact resolvent, their eigenvalues admit the following characterizations:
\begin{equation*}
    \mu_n \;=\; \min_{V_n \subseteq D(A)} \ 
    \max_{\substack{\phi \in V_n \\ \|\phi\|_{X} = 1}} 
    \langle A\phi, \phi \rangle_X,
\end{equation*}
and
\begin{equation*}
    \tilde{\mu}_n \;=\; \min_{{V}_n \subseteq D(A)} \ 
    \max_{\substack{\phi \in {V}_n \\ \|\phi\|_{X} = 1}} 
    \langle A_K \phi, \phi \rangle_X,
\end{equation*}
where $V_n$  denote an $n$-dimensional subspace of $D(A)$.

\medskip

\noindent We now distinguish two cases.  Recall that 
 the minimum in the  expressions above, is attained when $V_n$ coincides with the finite-dimensional space  $E_{n}= \textrm{span}\left\{\phi_{k}, k \leq n; \;  A\phi_{k} = \mu_{k}\phi_{k} \right\} $ for $\mu_n$  and $\widetilde E_{n}= \textrm{span}\left \{ \tilde \phi_{k}, k \leq n; \; A_K\tilde \phi_{k} = \tilde \mu_{k} \tilde \phi_{k} \right \}$ for $\tilde \mu_n$.
We next consider two different cases: 
\\
\\
\textbf{First case: $V_n= E_{n}$. } Consequently 

\begin{equation*} 
\tilde{\mu}_n - \mu_n \;\leq\; 
\max_{\phi_n \in E_n; \|\phi\|_{X} = 1} \langle (A + K)\phi_n, \phi_n \rangle_X 
\;-\; 
\max_{\phi_n \in E_n; \|\phi\|_{X} = 1} \langle A\phi_n, \phi_n \rangle_X,
\end{equation*}

\noindent Since ${E}_n$ is a finite-dimensional space,  the first maximum is attained at some vector $\hat{\phi}_n \in E_n$ with $\|\hat{\phi}_n\|_X = 1$.  

\medskip

\noindent Therefore, we obtain
\begin{equation*}
\tilde{\mu}_n - \mu_n 
\;\leq\; 
\langle (A+K)\hat{\phi}_n, \hat{\phi}_n \rangle_X 
- \langle A \hat{\phi}_n, \hat{\phi}_n \rangle_X
= \langle K\hat{\phi}_n, \hat{\phi}_n \rangle_X.
\end{equation*}

\medskip

\noindent Since $A\hat{\phi}_n = \mu_n \hat{\phi}_n$, it follows that
$A^{-1}\hat{\phi}_n = \tfrac{1}{\mu_n}\hat{\phi}_n$.  
Thus, we deduce
\begin{equation*}
\tilde{\mu}_n 
\;\leq\;  
\mu_n \Big( 1 + \langle K A^{-1}\hat{\phi}_n, \hat{\phi}_n \rangle_X \Big).
\end{equation*}

 \noindent \textbf{Second case: $V_n= \widetilde E_{n}$. }

\begin{equation*} 
\mu_n - \tilde{\mu}_n \;\leq\; 
\max_{\tilde{\phi}_n \in \tilde{E}_n} \langle A \tilde{\phi}_n, \tilde{\phi}_n \rangle_X  
\;-\;  
\max_{\tilde{\phi}_n \in \tilde{E}_n} \langle (A + K)\tilde{\phi}_n, \tilde{\phi}_n \rangle_X.
\end{equation*}

\noindent Since $\tilde{E}_n$ is a finite-dimensional space, the first maximum is attained at some vector $\widetilde{\hat{\phi}}_n \in \tilde{E}_n$ with $\|\widetilde{\hat{\phi}}_n\|_X = 1$.  

\medskip

\noindent Thus, we obtain
\begin{equation*}
\mu_n - \tilde{\mu}_n \;\leq\; 
\langle A \widetilde{\hat{\phi}}_n, \widetilde{\hat{\phi}}_n \rangle_X 
- \langle (A+K)\widetilde{\hat{\phi}}_n, \widetilde{\hat{\phi}}_n \rangle_X 
= -\,\langle K \widetilde{\hat{\phi}}_n, \widetilde{\hat{\phi}}_n \rangle_X.
\end{equation*}

\noindent Since $(A+K)\widetilde{\hat{\phi}}_n = \tilde{\mu}_n \widetilde{\hat{\phi}}_n$, it follows that
\[
(A+K)^{-1}\widetilde{\hat{\phi}}_n = \frac{1}{\tilde{\mu}_n}\,\widetilde{\hat{\phi}}_n.
\]
Therefore,
\begin{equation*}
\mu_n \;\leq\; 
\tilde{\mu}_n \Big( 1 - \langle K (A+K)^{-1}\widetilde{\hat{\phi}}_n, \widetilde{\hat{\phi}}_n \rangle_X \Big).
\end{equation*}

\medskip

\noindent Now, set
\[
\alpha_n = \langle K A^{-1}\hat{\phi}_n, \hat{\phi}_n \rangle_X,
\qquad 
\beta_n = \langle K (A+K)^{-1}\widetilde{\hat{\phi}}_n, \widetilde{\hat{\phi}}_n \rangle_X.
\]

Since $A$ is positive and $K$ is non-negative we have $  0 \leq \alpha_n $ and $ 0\leq \beta_n <1$.
\noindent We then obtain
\begin{equation*}
\frac{1}{1 - \beta_n} \;\leq\; \frac{\tilde{\mu}_n}{\mu_n} \;\leq\; 1 + \alpha_n.
\end{equation*}

\medskip

\noindent We next show that $\alpha_n \to 0$ and $\beta_n \to 0$, which yields
\[
\frac{\tilde{\mu}_n}{\mu_n} \;\longrightarrow\; 1, 
\qquad \text{as } n \to +\infty.
\]

\begin{proposition} \label{pop1}
Since $D(A)$ is dense in $X$ and $K A^{-1} \colon X \to X$ is a compact operator, it follows that
\[
\lim_{n \to \infty} \alpha_n = 0.
\]
\end{proposition}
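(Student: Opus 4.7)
The plan is to reduce the proposition to the weak convergence $\hat{\phi}_n \rightharpoonup 0$ in $X$. Once this is established, compactness of $KA^{-1}: X \to X$ promotes it to the strong convergence $KA^{-1}\hat{\phi}_n \to 0$, and the Cauchy-Schwarz inequality gives $|\alpha_n| \leq \|KA^{-1}\hat{\phi}_n\|_{X}\,\|\hat{\phi}_n\|_{X} \to 0$. The density of $D(A)$ in $X$ enters only through the fact that the eigenfunctions $(\phi_k)_{k \geq 1}$ form an orthonormal basis of $X$, which is what I use to test weak convergence.

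To prove $\hat{\phi}_n \rightharpoonup 0$, I expand $\hat{\phi}_n = \sum_{k=1}^{n} a_k^{(n)}\phi_k$ with $\sum_k |a_k^{(n)}|^2 = 1$. Since $\hat{\phi}_n$ is the maximizer of $\langle(A+K)\phi,\phi\rangle_X$ on the unit sphere of $E_n$, the min-max formula yields $\langle (A+K)\hat{\phi}_n,\hat{\phi}_n\rangle_X \geq \tilde{\mu}_n$, hence
\[
\langle A\hat{\phi}_n,\hat{\phi}_n\rangle_X \;\geq\; \tilde{\mu}_n - \langle K\hat{\phi}_n,\hat{\phi}_n\rangle_X \;\geq\; \mu_n - \|K\|,
\]
where I used $\tilde{\mu}_n \geq \mu_n$ (an immediate consequence of min-max combined with $K \geq 0$) and the boundedness of $K$ on $X$. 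On the other hand $\langle A\hat{\phi}_n,\hat{\phi}_n\rangle_X = \sum_k \mu_k |a_k^{(n)}|^2 \leq \mu_n$, since $\mu_k \leq \mu_n$ for every $k \leq n$. For any fixed integer $m$ and every $n > m$, setting $S_m^{(n)} := \sum_{k\leq m}|a_k^{(n)}|^2$ and splitting the Rayleigh quotient at index $m$ gives
\[
\mu_n - \|K\| \;\leq\; \mu_m S_m^{(n)} + \mu_n\bigl(1 - S_m^{(n)}\bigr) \;=\; \mu_n - (\mu_n - \mu_m) S_m^{(n)},
\]
so $(\mu_n - \mu_m) S_m^{(n)} \leq \|K\|$. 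Since $\mu_n \to \infty$, this forces $S_m^{(n)} \to 0$ as $n \to \infty$ for every fixed $m$, hence $a_k^{(n)} = \langle \hat{\phi}_n,\phi_k\rangle_X \to 0$ for every $k$; together with $\|\hat{\phi}_n\|_X = 1$ and the orthonormal basis $(\phi_k)$, this yields $\hat{\phi}_n \rightharpoonup 0$ in $X$.

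The main obstacle I expect is precisely this weak-convergence step: $\hat{\phi}_n$ is characterized only variationally as a maximizer on $E_n$, with no explicit formula, so the argument must squeeze information out of the two-sided inequality $\mu_n - \|K\| \leq \langle A\hat{\phi}_n,\hat{\phi}_n\rangle_X \leq \mu_n$. The tight upper bound $\mu_n$, valid because $E_n$ is spanned by eigenfunctions of $A$ with eigenvalues at most $\mu_n$, is what forces the low-frequency content of $\hat{\phi}_n$ to vanish; the positivity of $K$ is what keeps the lower bound from collapsing. Once the weak convergence is in place, the compactness of $KA^{-1}$ does essentially no work beyond converting the weak limit into a strong one.
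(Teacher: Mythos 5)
Your proof is correct, but it takes a genuinely different route from the paper's. The paper never analyzes the maximizers $\hat{\phi}_n$ themselves: it proves directly that $KA^{-1}\phi_n \to 0$ strongly, by extracting a convergent subsequence (compactness of $KA^{-1}$ on the bounded sequence $(\phi_n)$) and identifying its limit as $0$ through the duality $\langle \psi, KA^{-1}\phi_n\rangle_X = \langle A^{-1}K\psi, \phi_n\rangle_X$, which vanishes because it is the $n$-th Fourier coefficient of the fixed vector $A^{-1}K\psi$ in the orthonormal basis $(\phi_n)$. That argument leans on the orthonormality of the $\phi_n$ and on the paper's earlier (tacit) identification of $\hat{\phi}_n$ with a unit eigenvector of $A$ for $\mu_n$. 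You instead prove the weak convergence $\hat{\phi}_n \rightharpoonup 0$ of the actual variational maximizers, via the Rayleigh-quotient localization $(\mu_n - \mu_m)S_m^{(n)} \leq \|K\|$; this is more robust, since it needs neither $\hat{\phi}_n$ to be an eigenvector of $A$ nor the family $(\hat{\phi}_n)$ to be orthonormal, and it makes explicit where the positivity of $K$ and the divergence $\mu_n \to +\infty$ enter. The one thing your route buys at a price is the constant $\|K\|$: you need $\sup_n \langle K\hat{\phi}_n,\hat{\phi}_n\rangle_X < \infty$, i.e.\ (form-)boundedness of $K$ on $X$, which goes slightly beyond the proposition's literal hypothesis that $KA^{-1}$ is compact on $X$, although it is consistent with the paper's standing assumption that $K$ is a bounded (indeed compact) self-adjoint perturbation, and the paper's own proof quietly uses $K\psi$ for arbitrary $\psi \in X$ as well. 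Both arguments conclude identically: weak convergence plus compactness of $KA^{-1}$ upgrades to strong convergence, and Cauchy--Schwarz gives $\alpha_n \to 0$.
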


\begin{proof}
Recall  both assumptions are true. Precisely  $A$ is densely defined and  since  $K : D(A) \to X$, is a compact operator,  $KA^{-1} : X \to X$ is also compact. 

\medskip

Since $KA^{-1} : X \to X$ is compact and $(\phi_n)_{n \in \mathbb{N}}$ is uniformly bounded in $X$, there exists a convergent subsequence $(KA^{-1}\phi_n)_{n \in \mathbb{N}}$ such that
\[
KA^{-1}\phi_n \longrightarrow \phi \in X \quad \text{as } n \to +\infty,
\]
for some $\phi \in X$. Next we show that  $\phi =0$.\\

\medskip

\noindent Now, let $\psi \in X$. Since $(\phi_n)_{n \in \mathbb{N}^*}$ is an orthonormal basis of $D(A)$, we have
\[
A^{-1}K\psi = \sum_{n=1}^{+\infty} \langle A^{-1}K\psi, \phi_n \rangle_X \, \phi_n,
\]
and
\[
\|A^{-1}K\psi\|_X^2 = \sum_{n=1}^{+\infty} \langle A^{-1}K\psi, \phi_n \rangle_X^{2} < +\infty.
\]
Therefore
\[
\forall \psi \in X, \qquad \langle A^{-1}K\psi, \phi_n \rangle_X \longrightarrow 0 
\quad \text{as } n \to +\infty.
\]

\medskip

\noindent Since $A^{-1}$ and $K$ are both self-adjoint operators, we have
\[
\langle A^{-1}K\psi, \phi_n \rangle_X 
= \langle \psi, KA^{-1}\phi_n \rangle_X, 
\qquad \forall \psi \in X.
\]
Thus,
\[
\forall \psi \in X, \qquad 
\langle \psi, KA^{-1}\phi_n \rangle_X \longrightarrow 0 \quad \text{as } n \to +\infty.
\]

\medskip

\noindent Consequently, we have
\[
KA^{-1}\phi_n \rightharpoonup 0 \quad \text{as } n \to +\infty.
\]

Hence 
 \[
 \phi = 0.
 \]
 Since all convergent subsequence of $KA^{-1}\phi_n$ converges to $0$, the whole sequence 
 converges to $0$, which finishes the proof.
 
\end{proof}

\begin{proposition} \label{pop2}
Since $D(A)$ is dense in $X$ and $K (A+K)^{-1} \colon X \to X$ is a compact operator, it follows that
     \[\lim_{n \rightarrow \infty} \beta_n = 0.\]
 \end{proposition}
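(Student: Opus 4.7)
The plan is to mirror the proof of Proposition~\ref{pop1}, replacing $A$ by $A_K=A+K$ and the eigenbasis $(\phi_k)$ of $A$ by the eigenbasis $(\tilde{\phi}_k)$ of $A_K$. First, I would verify that $K(A+K)^{-1}\colon X\to X$ is compact: since $A_K$ is self-adjoint and non-negative with compact resolvent (the compactness of the resolvent is inherited from $A$ under the compact perturbation $K$), the operator $(A+K)^{-1}\colon X\to D(A_K)=D(A)$ is bounded, and composing with the compact operator $K\colon D(A)\to X$ yields a compact operator on $X$. Next, since $A_K$ is self-adjoint with compact resolvent, the normalized eigenfunctions $(\tilde{\phi}_n)_{n\in\mathbb{N}^*}$ form an orthonormal basis of $X$.

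Following the pattern of Proposition~\ref{pop1}, I would then exploit self-adjointness of $K$ and $(A+K)^{-1}$: for any $\psi\in X$,
\[
\langle \psi,\,K(A+K)^{-1}\tilde{\phi}_n\rangle_X=\langle (A+K)^{-1}K\psi,\,\tilde{\phi}_n\rangle_X\longrightarrow 0
\]
as $n\to\infty$, by Parseval's identity applied to the vector $(A+K)^{-1}K\psi\in X$ in the orthonormal basis $(\tilde{\phi}_n)$. This gives $K(A+K)^{-1}\tilde{\phi}_n\rightharpoonup 0$ weakly in $X$, and the compactness of $K(A+K)^{-1}$ upgrades this to strong convergence, so that every convergent subsequence has limit $0$ and hence the whole sequence tends to $0$ in $X$.

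To conclude $\beta_n\to 0$, I would apply the Cauchy--Schwarz bound
\[
|\beta_n|\leq \|K(A+K)^{-1}\widetilde{\hat{\phi}}_n\|_X\,\|\widetilde{\hat{\phi}}_n\|_X = \|K(A+K)^{-1}\widetilde{\hat{\phi}}_n\|_X,
\]
which reduces matters to showing $\|K(A+K)^{-1}\widetilde{\hat{\phi}}_n\|_X\to 0$; by compactness of $K(A+K)^{-1}$, this follows from $\widetilde{\hat{\phi}}_n\rightharpoonup 0$ weakly in $X$. The main obstacle is precisely this weak convergence, because $\widetilde{\hat{\phi}}_n$ is the maximizer in $\widetilde{E}_n$ rather than an element of the orthonormal basis. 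Expanding $\widetilde{\hat{\phi}}_n=\sum_{k=1}^n c_k^{(n)}\tilde{\phi}_k$, the lower bound $\langle A\widetilde{\hat{\phi}}_n,\widetilde{\hat{\phi}}_n\rangle_X\geq \mu_n$ obtained from min-max with $V_n=\widetilde{E}_n$, combined with $\tilde{\mu}_n-\mu_n\leq \|K\|$ (another consequence of min-max), forces a mass-concentration estimate: for each fixed $m$ and any $c>0$, eventually most of the mass of $(c_k^{(n)})$ lies on indices $k$ with $\tilde{\mu}_k\geq \mu_n-c\|K\|$, which excludes $m$ for large $n$ and yields $c_m^{(n)}=\langle\widetilde{\hat{\phi}}_n,\tilde{\phi}_m\rangle_X\to 0$. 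Since $(\tilde{\phi}_m)$ is an orthonormal basis of $X$, this gives the required weak convergence and closes the argument.
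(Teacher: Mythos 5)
Your proposal is correct, and its core mechanism is the same as the paper's: expand against the orthonormal eigenbasis $(\tilde{\phi}_n)$ of $A_K$ to get weak convergence of $K(A+K)^{-1}\tilde{\phi}_n$ to zero, then use compactness to upgrade to strong convergence. Where you genuinely depart from the paper is in the final step. The paper simply treats $\widetilde{\hat{\phi}}_n$ as if it were the eigenfunction $\tilde{\phi}_n$ (it asserts $(A+K)\widetilde{\hat{\phi}}_n=\tilde{\mu}_n\widetilde{\hat{\phi}}_n$ without justification, even though $\widetilde{\hat{\phi}}_n$ is defined as the maximizer of $\langle A\phi,\phi\rangle_X$ over the unit sphere of $\widetilde{E}_n$, which need not be an eigenvector of $A_K$), and concludes immediately. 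You instead prove that the maximizers themselves satisfy $\widetilde{\hat{\phi}}_n\rightharpoonup 0$ via a mass-concentration estimate on the coefficients $c_k^{(n)}$, and then apply compactness and Cauchy--Schwarz. This extra step closes a real gap in the paper's argument, so your route is the more careful one. One caveat: your bound $\tilde{\mu}_n-\mu_n\leq\|K\|$ presupposes that $K$ is bounded on $X$, which does not follow from the paper's hypothesis that $K\colon D(A)\to X$ is compact (e.g.\ $K=A^{1/2}$ is $A$-compact but unbounded on $X$, with $\tilde{\mu}_n-\mu_n=\mu_n^{1/2}\to\infty$). The fix is immediate: use the bound $\tilde{\mu}_n\leq\mu_n(1+\alpha_n)$ already established in the first case, together with $\tilde{\mu}_n\geq\mu_n$ (from $K\geq 0$ and min-max), to get
\[
|c_m^{(n)}|^2\;\leq\;\frac{\tilde{\mu}_n-\mu_n}{\tilde{\mu}_n-\tilde{\mu}_m}\;\leq\;\frac{\mu_n\,\alpha_n}{\mu_n-\tilde{\mu}_m}\;\longrightarrow\;0
\]
for each fixed $m$, which yields the same weak convergence and completes your argument under the stated hypotheses.
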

 
\begin{proof}
The proof is similar to the proof of the previous Proposition. Both assumptions of the Proposition are 
satisfied. \\

\medskip

Since $K(A+K)^{-1} : X \to X$ is compact and $(\tilde \phi_n)_{n \in \mathbb{N}}$ is uniformly bounded in $X$, there exists a convergent subsequence $(K(A+K)^{-1}\phi_n)_{n \in \mathbb{N}}$ such that
\[
K(A+K)^{-1}\tilde \phi_n \longrightarrow \phi \in X \quad \text{as } n \to +\infty,
\]
for some $\phi \in X$. Next we show that  $\phi =0$.\\

\medskip

\noindent Now, let $\psi \in X$. Since $(\tilde \phi_n)_{n \in \mathbb{N}^*}$ is an orthonormal basis of $D(A)$, we have
\[
(A+K)^{-1}K\psi = \sum_{n=1}^{+\infty} \langle A^{-1}K\psi, \tilde \phi_n \rangle_X \, \tilde \phi_n,
\]
and
\[
\|(A+K)^{-1}K\psi\|_X^2 = \sum_{n=1}^{+\infty} \langle (A+K)^{-1}K\psi,  \tilde \phi_n \rangle_X^{2} < +\infty.
\]
Therefore
\[
\forall \psi \in X, \qquad \langle (A+K)^{-1}K\psi, \tilde \phi_n \rangle_X \longrightarrow 0 
\quad \text{as } n \to +\infty.
\]

\medskip

\noindent Since $(A+K)^{-1}$ and $K$ are both self-adjoint operators, we have
\[
\langle (A+K)^{-1}K\psi, \tilde \phi_n \rangle_X 
= \langle \psi, K(A+K)^{-1}\tilde \phi_n \rangle_X, 
\qquad \forall \psi \in X.
\]
Thus,
\[
\forall \psi \in X, \qquad 
\langle \psi, K(A+K)^{-1}\phi_n \rangle_X \longrightarrow 0 \quad \text{as } n \to +\infty.
\]

\medskip

\noindent Consequently, we have
\[
K(A+K)^{-1}\phi_n \rightharpoonup 0 \quad \text{as } n \to +\infty,
\]
which implies 
 \[
 \phi = 0.
 \]
 Since all convergent subsequence of $K(A+K)^{-1}\phi_n$ converges to $0$, the whole sequence 
 converges to $0$, which achieves the proof of the Proposition.

\end{proof}

\noindent We deduce from the previous Proposition the following inequalities
\begin{equation*}
     \frac{1}{1-\beta_n} \;\leq\; \frac{\tilde{\mu}_n}{\mu_n} \;\leq\; 1 + \alpha_n.
\end{equation*}
Then, there exists a sequence $(\theta_n)_{n\in \mathbb N^*}$  verifying 
\[
\frac{1}{1 - \beta_n} - 1 \leq \theta_n \leq \alpha_n;
\qquad \lim_{n \to \infty} \theta_n = 0,
\]
and 
\begin{equation*}
    \tilde{\mu}_n = \mu_n \big( 1 + \theta_n \big).
\end{equation*}

\medskip

\noindent By interpolation techniques one can construct a continuous  function 
$f : \mathbb{R}_+ \to \mathbb{R}$ satisfying  

\[\lim_{x \to +\infty} f(x) = 0; \quad f(\mu_n)= \theta_n, \;\;  n\in \mathbb N^*.\]

Finally, we obtain 
\begin{equation*}
    \tilde{\mu}_n = \mu_n \big( 1 + f(\mu_n) \big).
\end{equation*}
\end{proof}

\section{Proof of Theorem \ref{main theorem}}
\noindent We will show that the gap condition of the perturbated eigenvalues is verified under conditions  of 
Theorem \ref{main theorem}. 
\begin{lemma}
The following gap condition:
\begin{equation} \label{gap}
\tilde \mu_{n+1} -\tilde \mu_{n} > \tilde \gamma,
\end{equation}
holds for all $n\in \mathbb N^*$ with $\tilde \gamma = (1 - \kappa)\gamma$.

\end{lemma}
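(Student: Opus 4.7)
The plan is to reduce the gap for $(\tilde\mu_n)$ to the gap for $(\mu_n)$ using the identity \eqref{identity0} from Theorem \ref{eigenvalues} together with the monotonicity hypothesis (i).

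First, I would write
\[
\tilde\mu_{n+1} - \tilde\mu_n = \mu_{n+1}\bigl(1+f(\mu_{n+1})\bigr) - \mu_n\bigl(1+f(\mu_n)\bigr) = (\mu_{n+1}-\mu_n) + \bigl(\mu_{n+1}f(\mu_{n+1}) - \mu_n f(\mu_n)\bigr),
\]
so the only issue is to control the second parenthesis from below.

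Next, I would invoke condition (i): the function $g(x) := xf(x) + \kappa x$ is non-decreasing on $[\mu_1,+\infty)$. Since $\mu_n < \mu_{n+1}$ and both lie in $[\mu_1,+\infty)$, applying $g(\mu_{n+1}) \geq g(\mu_n)$ and rearranging yields
\[
\mu_{n+1} f(\mu_{n+1}) - \mu_n f(\mu_n) \;\geq\; -\kappa(\mu_{n+1}-\mu_n).
\]
Substituting back and using the original gap condition \eqref{gap condition}, I obtain
\[
\tilde\mu_{n+1} - \tilde\mu_n \;\geq\; (1-\kappa)(\mu_{n+1}-\mu_n) \;>\; (1-\kappa)\gamma \;=\; \tilde\gamma,
\]
which is exactly \eqref{gap}. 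Since $\kappa \in [0,1)$, the constant $\tilde\gamma$ is strictly positive, and as a by-product this argument also confirms that the labelling $\tilde\mu_n < \tilde\mu_{n+1}$ is consistent with the increasing ordering assumed at the start of Section 3.

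There is no serious obstacle here: the statement is essentially a one-line consequence of Theorem \ref{eigenvalues} combined with assumption (i), which was precisely designed to absorb the oscillation of $xf(x)$ into a controllable fraction of the unperturbed gap. The only point worth noting is why the hypothesis is formulated as the monotonicity of $xf(x)+\kappa x$ rather than a direct Lipschitz-type bound on $x\mapsto xf(x)$: the chosen form automatically gives the one-sided lower bound on finite differences that is needed, without imposing differentiability of $f$.
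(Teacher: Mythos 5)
Your proof is correct and follows essentially the same route as the paper: the same decomposition of $\tilde\mu_{n+1}-\tilde\mu_n$ via Theorem \ref{eigenvalues}, the same use of the monotonicity of $xf(x)+\kappa x$ to bound the difference $\mu_{n+1}f(\mu_{n+1})-\mu_n f(\mu_n)$ from below by $-\kappa(\mu_{n+1}-\mu_n)$, and the same final combination with the gap condition \eqref{gap condition}. Your handling of strict versus non-strict inequalities is in fact slightly cleaner than the paper's, since you derive $\geq$ from monotonicity and recover strictness from the unperturbed gap.
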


\begin{proof}
Since there exists $\kappa \in [0,1)$ such that the function 
\[
x \mapsto x f(x) + \kappa x,
\]
is non-decreasing, and since the spectral gap condition 
\[
\mu_{n+1} - \mu_{n} > \gamma,
\]
holds for some positive constant $\gamma$, we deduce that
\[
\mu_{n+1}f(\mu_{n+1}) + \kappa \mu_{n+1} 
> \mu_{n} f(\mu_{n}) + \kappa \mu_{n},
\]
that is,
\[
\mu_{n+1}f(\mu_{n+1}) - \mu_{n} f(\mu_{n}) 
> -\kappa (\mu_{n+1} - \mu_{n}).
\]

\medskip

\noindent Therefore, we can write
\begin{align*}
\tilde{\mu}_{n+1} - \tilde{\mu}_{n} 
&= \mu_{n+1}(1 + f(\mu_{n+1})) - \mu_{n}(1 + f(\mu_{n})) \\
&= (\mu_{n+1} - \mu_{n}) + \big( \mu_{n+1}f(\mu_{n+1}) - \mu_{n} f(\mu_{n}) \big) \\
&> (\mu_{n+1} - \mu_{n}) - \kappa (\mu_{n+1} - \mu_{n}) \\
&= (1 - \kappa)(\mu_{n+1} - \mu_{n}) \\
&\geq (1 - \kappa)\gamma.
\end{align*}

\noindent Hence, by taking 
\[
\tilde{\gamma} = (1 - \kappa)\gamma > 0,
\]
we obtain the desired inequality 
\[
\tilde{\mu}_{n+1} - \tilde{\mu}_{n} > \tilde{\gamma}.
\]
\end{proof}

\begin{lemma}
\noindent Assume that $R= AK - KA$ is a compact operator.   For $k \in \mathbb{N}^*$ set  $\tilde{P}_{k}$ the spectral projection onto the eigenspace of $A_{K}$ corresponding to the eigenvalue $\tilde{\mu}_{k}$, that is $\tilde{P}_{k} = \langle \cdot , \tilde \phi_k \rangle_X \tilde \phi_k$. \\

Then
\begin{align*}
   \tilde{P}_{k} K &= K \tilde{P}_{k} 
   + \tilde{F}_j(\tilde{\mu}_k) \, R \tilde{P}_{k} 
   + \tilde{P}_{k} R \, \tilde{F}_j(\tilde{\mu}_k),
\end{align*}
where
\[
\tilde{F}_j(\tilde{\mu}_{k}) 
= \sum_{\substack{j=1 \\ j \neq k}}^{\infty} 
\frac{\tilde{P}_{j}}{\tilde{\mu}_{k} - \tilde{\mu}_{j}}.
\]
\end{lemma}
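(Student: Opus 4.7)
The plan is to verify the claimed identity by testing both sides on the orthonormal basis $(\tilde{\phi}_m)_{m\in\mathbb{N}^*}$, which spans $X$ since $A_K$ is self-adjoint with compact resolvent. The central ingredient is the commutator relation
\[
[A_K, K] = A_K K - K A_K = (A+K)K - K(A+K) = AK - KA = R,
\]
so the $K^2$ terms cancel and the perturbed operator inherits the same commutator with $K$ as $A$.

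First I would compute the off-diagonal matrix entries of $K$ in the basis $(\tilde{\phi}_j)$. Taking the inner product of $R\tilde{\phi}_j = A_K K\tilde{\phi}_j - K A_K \tilde{\phi}_j$ against $\tilde{\phi}_i$, using self-adjointness of $A_K$ and the eigenvalue equation $A_K \tilde{\phi}_j = \tilde{\mu}_j \tilde{\phi}_j$, yields
\[
\langle R\tilde{\phi}_j, \tilde{\phi}_i \rangle_X = (\tilde{\mu}_i - \tilde{\mu}_j)\,\langle K\tilde{\phi}_j, \tilde{\phi}_i \rangle_X,
\]
so that, for $i \neq j$,
\[
\langle K\tilde{\phi}_j, \tilde{\phi}_i \rangle_X \;=\; \frac{\langle R\tilde{\phi}_j, \tilde{\phi}_i \rangle_X}{\tilde{\mu}_i - \tilde{\mu}_j}.
\]

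Next I would apply each side of the claimed identity to an eigenfunction $\tilde{\phi}_m$ and treat the two cases. If $m \neq k$, the left-hand side equals $\tilde{P}_k K \tilde{\phi}_m = \langle K\tilde{\phi}_m, \tilde{\phi}_k\rangle_X \tilde{\phi}_k$, while on the right $K\tilde{P}_k \tilde{\phi}_m = 0$ and $\tilde{F}_j(\tilde{\mu}_k) R \tilde{P}_k \tilde{\phi}_m = 0$; the remaining term $\tilde{P}_k R \tilde{F}_j(\tilde{\mu}_k)\tilde{\phi}_m$ reduces to $\frac{\langle R\tilde{\phi}_m, \tilde{\phi}_k\rangle_X}{\tilde{\mu}_k - \tilde{\mu}_m}\tilde{\phi}_k$, which matches the LHS by the off-diagonal formula. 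If $m = k$, the RHS becomes $K\tilde{\phi}_k + \sum_{i\neq k}\frac{\langle R\tilde{\phi}_k, \tilde{\phi}_i\rangle_X}{\tilde{\mu}_k - \tilde{\mu}_i}\tilde{\phi}_i$, and expanding $K\tilde{\phi}_k = \sum_i \langle K\tilde{\phi}_k, \tilde{\phi}_i\rangle_X \tilde{\phi}_i$ in the basis and substituting $\langle K\tilde{\phi}_k, \tilde{\phi}_i\rangle_X = -\frac{\langle R\tilde{\phi}_k, \tilde{\phi}_i\rangle_X}{\tilde{\mu}_k - \tilde{\mu}_i}$ for $i \neq k$ produces an exact telescoping with only the diagonal term surviving, giving $\langle K\tilde{\phi}_k, \tilde{\phi}_k\rangle_X \tilde{\phi}_k = \tilde{P}_k K \tilde{\phi}_k$, as required.

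The main obstacle I anticipate is giving meaning to the series defining $\tilde{F}_j(\tilde{\mu}_k)$ and to the products $\tilde{F}_j(\tilde{\mu}_k) R\tilde{P}_k$ and $\tilde{P}_k R \tilde{F}_j(\tilde{\mu}_k)$. The series is not absolutely convergent in operator norm in general, but the gap condition $\tilde{\mu}_{j+1} - \tilde{\mu}_j > \tilde{\gamma}$ established in the preceding lemma implies $|\tilde{\mu}_k - \tilde{\mu}_j| \to \infty$ as $j \to \infty$, and compactness of $R$ forces $\|R\tilde{\phi}_j\|_X \to 0$ (since $\tilde{\phi}_j \rightharpoonup 0$). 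These two facts together guarantee strong convergence of $\tilde{F}_j(\tilde{\mu}_k) R \tilde{P}_k x$ for every $x \in X$, and the adjoint argument handles $\tilde{P}_k R \tilde{F}_j(\tilde{\mu}_k)$. Once this convergence is secured, the termwise verification above establishes the identity.
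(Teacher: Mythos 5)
Your proof is correct, but it takes a genuinely different route from the paper's. The paper derives the identity from the Riesz integral representation $\tilde{P}_{k} = -\tfrac{1}{2i\pi}\int_{\partial B_{\epsilon}} (\mu I - A_{K})^{-1}\,d\mu$: it writes $(\mu I - A_{K})^{-1}K - K(\mu I - A_{K})^{-1} = (\mu I - A_{K})^{-1} R\, (\mu I - A_{K})^{-1}$ (resting on the same cancellation $A_{K}K - KA_{K} = AK - KA = R$ that you use), expands the resolvent as $\tfrac{\tilde{P}_{k}}{\mu - \tilde{\mu}_{k}} + \tilde{F}_{j}(\mu)$ with $\tilde{F}_{j}$ holomorphic inside the contour, and evaluates the four resulting terms by the residue theorem. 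You instead verify the identity entry by entry in the eigenbasis of $A_{K}$, using the single relation $\langle K\tilde{\phi}_{j},\tilde{\phi}_{i}\rangle_X = \langle R\tilde{\phi}_{j},\tilde{\phi}_{i}\rangle_X/(\tilde{\mu}_{i}-\tilde{\mu}_{j})$ for $i\neq j$, which is exactly the first-order perturbation formula that the contour integral encodes; both your cases ($m\neq k$ and $m=k$, with its telescoping) check out. Your approach is more elementary (no complex analysis) and makes transparent why the diagonal term $\tilde{P}_{k}R\tilde{P}_{k}$ is absent (in the paper this is the vanishing of $\int_{\partial B_\epsilon}(\mu-\tilde{\mu}_{k})^{-2}d\mu$); the paper's contour argument is the standard analytic-perturbation route and would carry over verbatim to eigenvalues of higher multiplicity, whereas your telescoping leans on the rank-one form of $\tilde{P}_{k}$ assumed in the lemma. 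One small remark: your convergence discussion is more elaborate than needed. Since the gap condition gives $|\tilde{\mu}_{k}-\tilde{\mu}_{j}|>\tilde{\gamma}$ for all $j\neq k$ and the ranges of the $\tilde{P}_{j}$ are mutually orthogonal, $\tilde{F}_{j}(\tilde{\mu}_{k})$ is already a bounded operator on $X$ with norm at most $1/\tilde{\gamma}$, so the compositions $\tilde{F}_{j}(\tilde{\mu}_{k})R\tilde{P}_{k}$ and $\tilde{P}_{k}R\tilde{F}_{j}(\tilde{\mu}_{k})$ are well defined without invoking compactness of $R$ or the decay of $\|R\tilde{\phi}_{j}\|_{X}$ at this stage.
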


\begin{proof}
We define the complex disc $B_{\epsilon}$ centered at $\tilde{\mu}_{k}$ with radius 
$\epsilon = \tfrac{\tilde{\gamma}}{4}$.   

\noindent The resolvents $(\mu I - A)^{-1}$ and $(\mu I - (A+K))^{-1}$ are well defined as operators from $X$ onto $D(A) \subset X$, for all
\[
\mu \in \partial B_{\epsilon} := \{ \, \mu \in \mathbb{C} : |\mu - \tilde{\mu}_{k}| = \epsilon \, \}.
\]

\noindent   The gap condition \eqref{gap} implies that $\tilde \mu_k$ is the only eigenvalue of 
$A+K$ within  the complex disc $\overline{B_\epsilon}$.   By the classical Riesz formula, we have
\[
\tilde{P}_{k} = \frac{-1}{2i\pi} \int_{\partial B_{\epsilon}} (\mu I - (A + K))^{-1} \, d\mu,
\]
where $i$ is the imaginary complex number and $I$ denotes the identity operator.

\medskip

\noindent Hence,
\begin{align*}
\tilde{P}_{k} K
&= \frac{1}{2i\pi}\int_{\partial B_\epsilon} (\mu I - (A + K))^{-1} K \, d\mu \\
&= \frac{1}{2i\pi}\int_{\partial B_\epsilon} K (\mu I - (A + K))^{-1} \, d\mu \\
&\quad + \frac{1}{2i\pi}\int_{\partial B_\epsilon} \Big[ (\mu I - (A + K))^{-1} K - K (\mu I - (A + K))^{-1} \Big] \, d\mu.
\end{align*}

\noindent Using the commutator relation $AK = KA + R$, we obtain
\begin{align*}
\tilde{P}_{k} K 
&= K \tilde{P}_{k} + \frac{1}{2i\pi}\int_{\partial B_\epsilon} (\mu I - (A + K))^{-1} R (\mu I - (A+K))^{-1} \, d\mu \\
&= K \tilde{P}_{k} + \frac{1}{2i\pi}\int_{\partial B_\epsilon} 
\Bigg(\frac{\tilde{P}_{k}}{\mu - \tilde{\mu}_{k}} + \tilde{F}_{j}(\mu)\Bigg) 
R \Bigg(\frac{\tilde{P}_{k}}{\mu - \tilde{\mu}_{k}} + \tilde{F}_{j}(\mu)\Bigg) 
\, d\mu,
\end{align*}
where
\[
\tilde{F}_{j}(\mu) = \sum_{\substack{j=1 \\ j \neq k}}^{\infty} \frac{\tilde{P}_{j}}{\mu - \tilde{\mu}_{j}}
\]
is holomorphic for $\mu \in \partial B_\epsilon$.

\medskip

\noindent Moreover, we get
\begin{align*}
\tilde{P}_{k} K
&= K \tilde{P}_{k} 
+ \frac{1}{2i\pi}\int_{\partial B_\epsilon} \frac{\tilde{P}_{k} R \tilde{P}_{k}}{(\mu - \tilde{\mu}_{k})^{2}} \, d\mu 
+ \frac{1}{2i\pi}\int_{\partial B_\epsilon} \frac{\tilde{F}_{j}(\mu) R \tilde{P}_{k}}{\mu - \tilde{\mu}_{k}} \, d\mu \\
&\quad + \frac{1}{2i\pi}\int_{\partial B_\epsilon} \frac{\tilde{P}_{k} R \tilde{F}_{j}(\mu)}{\mu - \tilde{\mu}_{k}} \, d\mu 
+ \frac{1}{2i\pi}\int_{\partial B_\epsilon} \tilde{F}_{j}(\mu) R \tilde{F}_{j}(\mu) \, d\mu.
\end{align*}

\noindent By the Residue theorem
\[
\int_{\partial B_\epsilon} \frac{\tilde{P}_{k} R \tilde{P}_{k}}{(\mu - \tilde{\mu}_{k})^{2}} \, d\mu = 0.
\]
Moreover, since $\tilde{F}_{j}$ is holomorphic on $\partial B_\epsilon$ and $R$ is bounded, we also have
\[
\int_{\partial B_\epsilon} \tilde{F}_{j}(\mu) R \tilde{F}_{j}(\mu) \, d\mu = 0.
\]

\medskip

\noindent Therefore
\begin{align*}
\tilde{P}_{k} K
&= K \tilde{P}_{k} 
+ \frac{1}{2i\pi}\int_{\partial B_\epsilon} \frac{\tilde{F}_{j}(\mu)}{\mu - \tilde{\mu}_{k}} \, d\mu \; R \tilde{P}_{k} 
+ \tilde{P}_{k} R \; \frac{1}{2i\pi}\int_{\partial B_\epsilon} \frac{\tilde{F}_{j}(\mu)}{\mu - \tilde{\mu}_{k}} \, d\mu \\
&= K \tilde{P}_{k} + \tilde{F}_{j}(\tilde{\mu}_{k}) R \tilde{P}_{k} + \tilde{P}_{k} R \tilde{F}_{j}(\tilde{\mu}_{k}),
\end{align*}
with
\[
\tilde{F}_{j}(\tilde{\mu}_{k}) = \sum_{\substack{j=1 \\ j \neq k}}^{\infty} \frac{\tilde{P}_{j}}{\tilde{\mu}_{k} - \tilde{\mu}_{j}}.
\]
\end{proof}

\begin{proof}[Proof of Theorem \ref{main theorem}] 
 First we observe  that the operator 
\[
\tilde{P}_k K : \ker(\tilde{\mu}_{k}I - A_{K}) \;\longrightarrow\; \ker(\tilde{\mu}_{k}I - A_{K}),
\] 
is self-adjoint, non-negative.\\

 \noindent Let  $\sigma_k \geq 0$ be the eigenvalues and $\tilde{\psi}_k$ be  the corresponding normalized eigenfunctions of the operator $\tilde{P}_k K$, that is 
\[
\tilde{P}_k K \tilde{\psi}_{k} = \sigma_k \tilde{\psi}_{k}, \;\; \| \tilde \psi_k\|_X =1.
\]

\noindent  Remark that $ \tilde{\psi}_k \in \ker(\tilde{\mu}_{k}I - A_{K})$, and so it is also an eigenfunction of 
the operator $A+K$. 

\medskip

\noindent Since
\begin{equation*}\label{Hautus condition}
 \|(A - \omega I)z\|_{X}^{2} + \|Cz\|_{Y}^{2} \;\geq\; \rho \|z\|_{X}^{2},  
 \qquad \forall \omega \in \mathbb{R}, \quad \forall z \in D(A),
\end{equation*}
and
\[
(A+K)\tilde{\psi}_{k} = \tilde{\mu}_{k} \tilde{\psi}_{k},
\]
we obtain
\[
\| ((\tilde{\mu}_{k} - \omega)I - K)\tilde{\psi}_{k} \|_{X}^{2} 
+ \|C\tilde{\psi}_{k}\|_{Y}^{2} \;\geq\; \rho.
\]

\noindent Setting now $\omega = \tilde{\mu}_{k} -  \sigma_{k}$,  we  get
\[
\| (\sigma_{k}I - \tilde{P}_k K + \tilde{P}_k K - K)\tilde{\psi}_{k} \|_{X}^{2} 
+ \|C\tilde{\psi}_{k}\|_{Y}^{2} \;\geq\; \rho,
\]
that is
\begin{equation} \label{wahed}
\|(I - \tilde{P}_k)K \tilde{\psi}_{k}\|_{X}^{2} + \|C\tilde{\psi}_{k}\|_{Y}^{2} \;\geq\; \rho.
\end{equation}

\medskip

\noindent Moreover
\begin{align*}
\|(I - \tilde{P}_k)K \tilde{\psi}_{k}\|_{X}^{2} 
&= \|K\tilde{\psi}_{k} - \tilde{P}_k K \tilde{\psi}_{k}\|_{X}^{2} \\
&= \|K\tilde{\psi}_{k} - K \tilde{P}_{k}\tilde{\psi}_{k} 
- \tilde{F}_j(\tilde{\mu}_k) R \tilde{P}_{k}\tilde{\psi}_{k} 
- \tilde{P}_{k} R \tilde{F}_j(\tilde{\mu}_k)\tilde{\psi}_{k}\|_{X}^{2} \\
&= \|\tilde{F}_j(\tilde{\mu}_k) R \tilde{P}_{k}\tilde{\psi}_{k} 
+ \tilde{P}_{k} R \tilde{F}_j(\tilde{\mu}_k)\tilde{\psi}_{k}\|_{X}^{2} \\
&= \|\tilde{F}_j(\tilde{\mu}_k) R \tilde{P}_{k}\tilde{\psi}_{k}\|_{X}^{2} \\
&\leq \frac{4}{\tilde{\gamma}} \|R \tilde{\psi}_{k}\|_{X}^{2}.
\end{align*}

\noindent Since $R : X \to X$ is compact and $(\tilde{\psi}_{k})_{k \in \mathbb{N}}$ is uniformly bounded, similar 
arguments in the proofs of Propositions \ref{pop1} and \ref{pop2} lead to the strong convergence 
of  the sequence $(R\tilde{\psi}_{k})_{k \in \mathbb{N}}$  to zero, that is 
\[
\|R\tilde{\psi}_{k}\|_X \rightarrow 0, \quad k\rightarrow +\infty.
\]

\noindent Finally, we get 
\begin{align*}   
\left \lVert ((I -\tilde P_k)K \tilde \psi_{k} \right \rVert^{2}_{X} &\leq \frac{4}{\tilde \gamma} \left \lVert R \tilde \psi_{k} \right \rVert^{2}_{X} \rightarrow 0, \quad  k \rightarrow +\infty.
 \end{align*}

\noindent Therefore there exists $k_\rho>0$ such that 
\begin{equation} \label{ethnan}
\left \lVert ((I -\tilde P_k)K \tilde \psi_{k} \right \rVert^{2}_{X} \leq  \frac{4}{\tilde{\gamma}} \|R \tilde{\psi}_{k}\|_{X}^{2} \leq \frac{\rho}{2}. \qquad \forall k > k_\rho.
\end{equation}
\noindent  Combining inequalities \eqref{wahed} and \eqref{ethnan}, we obtain 

\[
\|C \tilde{\psi}_k\|^2_Y \;\geq\; \frac{\rho}{2},
\qquad \forall k > k_\rho.
\]
\medskip

\noindent  On the other hand, we deduce from assumption \eqref{necessarycondition}  

\[c_{k_\rho} = \min_{k \leq k_\rho} \| C \tilde{\phi}_k \|_{Y} >0.\] 

Taking  $\tilde \delta = \min( c_{k_\rho}, \frac{\rho}{2}),$  we finally obtain 

\[
\|C \tilde{\phi}_k\|^2_Y \;\geq\; \tilde{\delta}, \qquad \forall k\in \mathbb N^*.
\]

\noindent Now, with the gap condition \eqref{gap} in mind,  and  according to the spectral observability criterion in Theorem \ref{thm1}, since $\tilde{\delta} > 0$  the perturbed system \eqref{observability4} is exactly observable, which finishes the proof. 
\end{proof}

\end{document}